\newtheorem{proposition}{Proposition}
\newtheorem*{MO}{Mazur-Orlicz Theorem}
\newtheorem*{corollary}{Corollary}
\newtheorem{lemma}{Lemma}
\newcommand{\plus}[1]{{#1}^+}  
\newcommand{\diff}{\triangle\!} 
\newcommand{\stirling}[2]{\left\{ {#1 \atop #2} \right\}}  
\newcommand{\stirlingfirst}[2]{\left[ {#1 \atop #2} \right]}  
\newcommand{\falling}[2]{#1^{\underline{#2}}}
\newcommand{\R}{\ensuremath{\mathbb{R}}}
\newcommand{\N}{\ensuremath{\mathbb{N}}}
\newcommand{\Q}{\ensuremath{\mathbb{Q}}}
\begin{document}
\title{ Positive Polynomials on Riesz Spaces}

\author{James Cruickshank}
\email{james.cruickshank@nuigalway.ie}
\address{National University of Ireland Galway}
\author{John Loane}
\email{john.loane@dkit.ie}
\address{Dundalk  Institute of Technology}
\author{Raymond A. Ryan}
\email{ray.ryan@nuigalway.ie}
\address{National University of Ireland Galway}
\date{\today}
\subjclass[2010]{Primary 46A40, 46G20; Secondary 46B30}
\keywords{Riesz space, positive polynomial, finite difference,
	Kantorovich extension theorem}

\begin{abstract}
We prove some properties of positive  polynomial mappings between
Riesz spaces, using finite difference calculus. 
We establish the polynomial analogue of the classical
result that positive, additive mappings are linear. And we prove a
polynomial
version of the Kantorovich extension theorem.
\end{abstract}

\maketitle

\section{Introduction}
The study of polynomial mappings on Riesz spaces is relevant
in a number of areas, including orthogonal
additivity and concavifications \cite{BLL,S}, symmetric Fremlin tensor 
products \cite{Bu}, monomial expansions for analytic
functions \cite{GR} and Hahn-Banach extension theorems
for polynomial functions \cite{Lo1,Lo2}.
The purpose of this paper is to establish some basic results about
positive polynomials.  
We extend some classical results from the linear
to the polynomial setting, including the Kantorovich extension
theorem.
A polynomial mapping is made up of a sum of homogeneous components,
each of which is generated by a multilinear mapping.  So our approach
is to go from linear to multilinear mappings, then to homogeneous polynomials
and finally to polynomials.  This process is aided by the use of
some techniques from finite difference calculus.

Let $E$, $F$ be vector spaces over the real numbers.
A mapping $P_k\colon E \to F$ is called a \emph{$k$-homogeneous polynomial}
if there exists a $k$-linear mapping $A_k\colon E^k \to F$ such that
$P_k(x) = A_k(x,\dots,x)$ for every $x\in E$.  We write this as
$P_k(x) = A_k(x^k)$  to indicate the $k$-fold repetition of the variable $x$.

If we require only that $A_k$ be additive rather than linear
in each variable,  we get the definition of a \emph{$k$-homogeneous $\Q$-polynomial}.
This
is equivalent to $A_k$ being $k$-linear with respect to
the field of rational numbers. Even in the case $k=1$ it is a standard result that
when $E=F=\R$, there exist additive (rational linear) mappings that are not linear \cite[pages~128--130]{K}.

A mapping $P\colon E\to F$ is called a \emph{polynomial of degree $m$}
(respectively, a \emph{$\Q$-polynomial of degree $m$})
if there exist  $k$-homogeneous polynomials $P_k$
(respectively, $k$-homogeneous $\Q$-polynomials $P_k$), for
$0\le k\le m$, with $P_m\neq 0$, such that $P=P_0 + \dots + P_m$.

If $P_k\colon E\to F$ is a  $k$-homogeneous $\Q$-polynomial that is generated by the
$k$-additive mapping $A_k$, we may assume without loss
of generality that $A_k$ is symmetric.  In this case, $A_k$ is uniquely determined by $P_k$
and we write $P_k = \hat{A}_k$.  Furthermore, $A_k$ can be recovered  from $P_k$
by means of the Polarization Formula:
\begin{equation} \label{polarization}
A_k(x_1,\dots,x_k) = \frac{1}{2^k k!} \sum_{\varepsilon_j=\pm1} \varepsilon_1 \dots \varepsilon_k
P_k(\varepsilon_1x_1 +\dots+ \varepsilon_k x_k)\,.
\end{equation}
This is easily proved by expanding the right hand side.  This is not the only such formula.  We also
have the Mazur-Orlicz Polarization Formula \cite{MO}:
\begin{equation}\label{MO polarization}
A_k(x_1,\dots,x_k) = \frac{1}{k!}\sum_{\delta_i = 0,1} (-1)^{k-\sum \delta_i} \,P_k(x+\delta_1 x_1 + \dots + \delta_k x_k) \,.
\end{equation}
Here $x\in E$ is arbitrary. The proof is given later.
The first formula is a special case of this one.


\section{Finite Difference Calculus for Polynomials}
\bigskip
Finite difference calculus is an effective tool for analysing polynomial 
mappings on vector spaces
and provides some useful insights.  We start with a brief review.

Let $f\colon E\to F$ be any mapping.
For $x,h\in E$ the forward difference $\diff f(x;h)$ is $f(x+h)-f(x)$.  Higher order differences
are defined recursively by
$$
\diff ^{n+1}f(x;h_1,\dots,h_{n+1})= \diff
\bigl(\diff ^n f(x;h_1,\dots,h_n)\bigr)(x;h_{n+1}) \,.
$$
It is easy to see that
\begin{align} \label{mixed}
\diff ^{n}f(x;h_1,\dots,h_{n})=& \sum_{\delta_i = 0,1}
(-1)^{n-\sum \delta_i} \,f(x+\delta_1 h_1 + \dots + \delta_n h_n)\\
=& \sum_{k=0}^n (-1)^{n-k} \negthickspace \negthickspace \negthickspace
\sum_{\substack{\delta_i= 0,1\\  \delta_1+\dots +\delta_n=k}}
f(x+\delta_1h_1 + \dots + \delta_n h_n) \,. \notag
\end{align}
When the increments $h_1=\dots h_n=h$ are equal, we write $\diff^n f(x;h^n)$ instead of $\diff^n f(x;h,\dots,h)$.
We refer to these as \emph{pure differences} and to differences of the form
$\diff^n(f; h_1,\dots,h_n)$ as \emph{mixed differences}..  For pure differences, the preceding formula reduces to
\begin{equation}\label{pure}
\diff ^n f(x;h^n) = \sum_{k=0}^n (-1)^{n-k}\; \binom{n}{k}\,f(x+kh)  \,.
\end{equation}
Inverting this, we get the \emph{Newton expansion:}
\begin{equation}\label{Newton}
f(x+nh) = \sum_{k=0}^n \binom{n}{k} \diff ^k f(x;h^k) \,,
\end{equation}
where we define $\diff^0 f(x;h^0)$ to be $f(x)$.

There is a somewhat surprising relationship between mixed and pure differences.  The following identity shows that every mixed difference
can be expressed as a linear combination of pure differences \cite[p.~418]{K}.
\begin{multline}\label{id}
\diff^n f(x;h_1,\dots,h_n) = \\
\sum_{\delta_j= 0,1}(-1)^{\sum \delta_j}\diff\!^n f\Bigl( \Bigl( x + \sum_{j=1}^n \delta_j h_j\Bigr) ; \Bigl( - \sum_{j=1}^n \frac{\delta_j h_j}{j}\Bigr)^n\Bigr) \,.
\end{multline}

Suppose that $P_k$ is a $k$-homogeneous $\Q$-polynomial, generated by the
symmetric $k$-additive symmetric map $A_k$.
We have
\begin{equation*}
\diff P_k(x;h_1) = A_k((x+h_1)^k) - A_k(x^k)
= \sum_{j_1=1}^{k} \binom{k}{j_1} A_k(x^{k-j_1},h_1^{j_1})  \,.
\end{equation*}
Iterating this, or using (\ref{mixed}), we get the general formula:
\begin{multline}\label{mixdiff}
\diff^n P_k(x;h_1,\dots,h_n) =\\
\sum_{\substack{j_0\ge 0, j_1,\dots,j_n\in\N\\j_0+j_1+\dots+j_n=k }}
\binom{k}{j_0, j_1, \dots ,j_n} A_k (x^{j_0},h_1^{j_1},\dots, h_n^{j_n} )
\end{multline}
for $0\le n\le k$.
If we put $n=k$ we get
\begin{equation*}\label{MOProof}
\diff^k P_k(x;h_1,\dots,h_k) = k! \, A_k(h_1,\dots,h_k) \,,
\end{equation*}
which, combined with (\ref{mixed}), gives the Mazur-Orlicz Polarization Formula~(\ref{MO polarization}).
Since the $k$-th differences of a $k$-homogeneous polynomial at the point $x$ do not depend on $x$,
we see that $\diff^n P_k (x;h_1,\dots,h_n)=0$ for $n>k$.

Next, we  find the pure differences of the $k$-homogeneous polynomial $P_k$.
Using~(\ref{pure}), we have
\begin{align*}
\diff^n P_k(x;h^n) &= \sum_{i=0}^n (-1)^{n-i}\; \binom{n}{i}\,P_k(x+ih)\\
&= \sum_{i=0}^n (-1)^{n-i} \binom{n}{i} \sum_{j=0}^k \binom{k}{j}
i^j A_k (x^{k-j},h^j)
\\
&= \sum_{j=0}^k \binom{k}{j} \biggl(\sum_{i=0}^n (-1)^{n-i} \binom{n}{i}
i^j \biggr) A_k (x^{k-j},h^j ) \\
&= n!\sum_{j=0}^k \binom{k}{j} \stirling{j}{n} A_k (x^{k-j},h^j)
\end{align*}
where
\begin{equation*}
\stirling{j}{n} = \frac{1}{n!}\sum_{i=0}^n (-1)^{n-i} \binom{n}{i} i^j
\end{equation*}
are the Stirling numbers of the second kind~\cite{GKP}.
(We follow the usual convention that $i^j=1$ when 
$i=j=0$.)
Summarizing, we have
\begin{equation}
\diff^n P_k(x;h^n) = n!\sum_{j=n}^k
\binom{k}{j} \stirling{j}{n} A_k (x^{k-j},h^j) \,.
\end{equation}
Note that this summation ranges over $n\le j\le k$, since the Stirling
numbers $\stirling{j}{n}$ are zero when $j<n$.

In particular, taking $x=0$ we have
\begin{equation}\label{riordan}
\diff^n P_k(0;h^n) = n! \stirling{k}{n} P_k(h)\,.
\end{equation}
This is a general form of the formula for differences of a scalar monomial given in~\cite[p.~202]{Ri}.

Now let $P = P_0+\dots+ P_m$ be a $\Q$-polynomial of degree $m$.
The $k$-homogeneous components $P_k$ are of course uniquely determined by $P$.
It is sometimes necessary to have a means by which  one of the homogeneous components  can be expressed
explicitly in terms of $P$.  There are several ways to do this.

We begin with an interpolation procedure \cite{MO}.  
If $q(t)= c_0+\dots+c_mt^m$ is a polynomial in 
$t\in\R$  of degree $m$, then  from
\begin{equation*}
q(j) = \sum_{k=0}^m j^k c_k, \quad 0\le j\le m \,.
\end{equation*}
we get
\begin{equation*}
c_k = \sum_{j=0}^m \alpha_{kj} q(j)\,,
\end{equation*}
where $(\alpha_{kj})$ is the inverse of the $(m+1)\times(m+1)$
Vandermonde matrix $(j^k)$.
It follows easily that if $P\colon E\to F$ is a rational polynomial of degree $m$ with homogeneous components
$P_0,\dots, P_m$, then
\begin{equation} \label{interpolation}
P_k(x) = \sum_{j=0}^m \alpha_{kj} P(jx)
\end{equation}
for every $x\in E$ and $k=0,\dots, m$.

It is also possible to use finite differences to extract the homogeneous
components.
We start with an observation by Mazur and Orlicz~\cite{MO}.
Fixing $x\in E$, consider the Newton
expansion~(\ref{Newton}) of $P$ at $0$:
\begin{equation*}
P(nx) = \sum_{j=0}^n \binom{n}{j}\diff^j P(0;x^j)  \,,
\end{equation*}
where $n=1,2,\dots $. Now the differences of $P$ of order $j$ vanish for $j>m$.
And, if $n<m$, then the binomial coefficients vanish when $n<j\le m$.  So we can
replace the variable upper  bound in this sum by $m$:
\begin{equation*}
P(nx) = \sum_{j=0}^m \binom{n}{j}\diff^j P(0;x^j)
\end{equation*}
for every $n\in \N$. We  have
\begin{equation} \label{stirling1}
\binom{n}{j} = \frac{ \falling{n}{j} }{j!}
= \frac{1}{j!} \sum_{k=0}^j\stirlingfirst{j}{k}(-1)^{j-k}n^k \,,
\end{equation}
where   $\falling{n}{j}= n(n-1)\dots (n-j+1)$ denote the  falling factorial powers and
$\stirlingfirst{j}{k}$ are  the Stirling numbers of the first kind~\cite{GKP}.   We now have
\begin{equation*}
P(nx) = \sum_{j=0}^m  \sum_{k=0}^j
\frac{1}{j!} \stirlingfirst{j}{k}(-1)^{j-k}n^k \, \diff^j P(0;x^j) \,.
\end{equation*}
Interchanging the order of summation gives
\begin{equation*}
P(nx) = \sum_{k=0}^m  n^k \, \sum_{j=k}^m
\frac{1}{j!} \stirlingfirst{j}{k}(-1)^{j-k} \diff^j P(0;x^j)
\end{equation*}
for every $n\in \N$.   But we also have
\begin{equation*}
P(nx)= \sum_{k=0}^m n^k\, P_k(x)\,
\end{equation*}
for every $n$.Comparing these, we arrive at a formula for
$P_k(x)$ in terms of the differences of $P$ at the origin:
\begin{equation}\label{extract}
P_k(x) = \sum_{j=k}^m
\frac{1}{j!} \stirlingfirst{j}{k}(-1)^{j-k} \diff^j P(0;x^j) \,.
\end{equation}

This gives one way to express $P_k$ in terms of $P$.  Taking another approach, we apply (\ref{riordan}) to compute the $k$th pure difference
\begin{equation*}
\diff^k P(0, (tx)^k) =
\sum_{j=k}^m k! \stirling{j}{k} P_j(x) t^j
\end{equation*}
for $x\in E$ and $t\in\R$, giving
\begin{equation}\label{limit1}
P_k(x) = \lim_{t\to 0} \frac{\diff^kP(0;(tx)^k)}{k!\,  t^k}\,.
\end{equation}
In the same way, from (\ref{mixdiff}) we get
\begin{equation}\label{limit2}
A_k(x_1,\dots,x_k) =
\lim_{t\to 0} \frac{\diff^k P(0;tx_1,\dots,tx_k)}{k!\, t^k}\,.
\end{equation}

When $P$ is vector valued, these limits are taken with respect to
the finest locally convex topology.

If $P$ is a $\Q$-polynomial of degree $m$, then all differences of $P$
of order $m+1$ vanish.  Mazur and Orlicz~\cite{MO} showed that
this condition is also sufficient.

\begin{MO}\cite{MO}
	Let $E$, $F$ be vector spaces.
	A mapping $P\colon E\to F$  is a $\Q$-polynomial of degree $m$
	or less if and only if  it satisfies the condition $\diff^{m+1} P(x;h^{m+1}) \allowbreak =0$ for all $x,h\in E$.
\end{MO}

This result has been rediscovered  by a number of authors in various settings.  See for example
\cite{McK,Hy,Sze,Van}. 
Note that, by virtue of the identity~(\ref{id}), the condition that all
the pure differences $\diff^{m+1} P(x;h^{m+1})$ vanish is equivalent to the
vanishing of the mixed differences $\diff^{k+1} P(x;h_1,\dots,h_{k+1})$  for all $x,h_1,\dots,h_{k+1}
\in E$.


\section{Positive Polynomials on Riesz Spaces}

Now let $E$ and $F$ be Riesz spaces.  All the Riesz spaces considered are assumed to be archimedean.
A $k$-homogeneous $\Q$-polynomial $P_k=\hat{A}_k$
is said to be \emph{positive} if the symmetric $k$-additive mapping $A_k$ is positive in each variable.  In other words,
we  have $A_k(x_1,\dots,x_k)\ge 0$ for all $x_1,\dots,x_k\ge 0$.  It follows from ($\ref{mixdiff}$) that $P_k$
is positive if and only if
\begin{equation*}
\diff^n P_k(x;h_1,\dots,h_n) \ge 0
\end{equation*}
for all $n=0,1,\dots$ and all $x,h_1,\dots,h_n \ge 0$.  Taking $n=1$, it follows that $P_k$ is 
positive and monotone on the positive cone
of $E$: if $0\le x\le y$, then $0\le P_k(x)\le P_k(y)$.  However, 
positivity and monotonicity on the positive cone 
are not sufficient to guarantee positivity~\cite{Lo2}.
We note too that in the above condition on the differences of $P_k$,
we cannot replace the mixed differences by pure differences. 
A homogeneous polynomial on $\R^n$ is positive if and only if
all the coefficients in its monomial expansion are nonnegative.
Thus the $3$-homogeneous polynomial on $\R^3$ given by
\begin{equation*}
P(x) = x_1^3 + x_2^3 + x_3^3 +3x_1^2(x_2 +x_3) +3x_2^2(x_1 +x_3)
+3x_3^2(x_1 + x_2) -6x_1x_2x_3
\end{equation*}
is not positve.  But straightforward calculations show that 
$\diff^n P(x;h^n) \ge 0$ for all $x,h\ge 0$ and all $n\ge 0$.

A $\Q$-polynomial $P=P_0+ \dots +P_m$ of degree $m$ is said to be positive if each of its homogeneous
components $P_k$ is positive.

\begin{proposition}
	Let $E$, $F$ be Riesz spaces and let  $P\colon E\to F$ be a $\Q$-polynomial.
	The following are equivalent:
	\begin{itemize}
		\item[(a)] $P$ is positive.
		\item[(b)] $\diff^n P (x;h_1,\dots,h_n) \ge 0$ for all $n$ and  for all $x,h_1,\dots,h_n\ge 0$ in $E$.
		\item[(c)] $\diff^n P (0;h_1,\dots,h_n) \ge 0$ for all $n$ and for all $h_1,\dots,h_n\ge 0$ in $E$.
	\end{itemize}
\end{proposition}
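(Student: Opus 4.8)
The plan is to establish the cyclic chain (a) $\Rightarrow$ (b) $\Rightarrow$ (c) $\Rightarrow$ (a), with the last implication carrying essentially all the content.

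For (a) $\Rightarrow$ (b) I would use that the forward-difference operator is additive in its function argument, so that for each fixed $n$ one has $\diff^n P(x;h_1,\dots,h_n)=\sum_{k=0}^m \diff^n P_k(x;h_1,\dots,h_n)$. By hypothesis every homogeneous component $P_k$ is positive, and it has already been recorded that positivity of $P_k$ is equivalent to $\diff^n P_k(x;h_1,\dots,h_n)\ge 0$ for all $n$ and all $x,h_1,\dots,h_n\ge 0$. Summing these nonnegative terms gives (b). The step (b) $\Rightarrow$ (c) is immediate: specialise to $x=0$, which is permitted since $0$ lies in the positive cone.

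The substance is (c) $\Rightarrow$ (a), where I must show that each generating symmetric $k$-additive map $A_k$ satisfies $A_k(h_1,\dots,h_k)\ge 0$ for $h_1,\dots,h_k\ge 0$. Fix such a $k$ and such increments, and probe the difference $\diff^k P(0;\cdot)$ along the scaled arguments $th_1,\dots,th_k$ for positive rational $t$. Since $\diff^k P_j=0$ for $j<k$, and since evaluating (\ref{mixdiff}) at $x=0$ annihilates every term carrying a factor $A_j$ with a zero entry, multilinearity shows that each surviving contribution from $P_j$ is homogeneous of degree $j$ in $t$. Hence
\[
\diff^k P(0;th_1,\dots,th_k)=\sum_{j=k}^m c_j\, t^j ,\qquad c_j=\diff^k P_j(0;h_1,\dots,h_k),
\]
a polynomial in $t$ with coefficients in $F$ whose lowest coefficient, by the $n=k$ case of (\ref{mixdiff}), is $c_k=k!\,A_k(h_1,\dots,h_k)$. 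Because $th_i\ge 0$, hypothesis (c) guarantees $\diff^k P(0;th_1,\dots,th_k)\ge 0$ for every rational $t>0$.

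It then remains to extract the lowest coefficient, and this is where I expect the only real obstacle. Setting $t=1/n$ and scaling by $n^k>0$ yields $c_k+\sum_{j>k}c_j n^{k-j}\ge 0$ for all $n\in\N$. Since $n^{k-j}\le 1/n$ when $j>k$, the tail is controlled by $\bigl|\sum_{j>k}c_j n^{k-j}\bigr|\le u/n$ with $u=\sum_{j>k}|c_j|$, whence $-c_k\le u/n$ and, taking positive parts, $n\,\plus{(-c_k)}\le u$ for every $n$. The difficulty is precisely this passage to the limit within the order structure; it is dissolved by the standing assumption that $F$ is archimedean, which forces $\plus{(-c_k)}=0$, i.e.\ $c_k\ge 0$ and so $A_k(h_1,\dots,h_k)\ge 0$. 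As $k$ and the $h_i$ were arbitrary, every $A_k$ is positive and $P$ is positive. (One could instead divide by $t^k$ and let $t\to 0^+$ in (\ref{limit2}), appealing to closedness of the positive cone in the finest locally convex topology; the archimedean argument keeps everything purely algebraic.)
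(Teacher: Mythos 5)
Your proof is correct, and its overall architecture --- the chain (a) $\Rightarrow$ (b) $\Rightarrow$ (c) $\Rightarrow$ (a), with the first two implications read off from the remarks before the proposition --- coincides with the paper's. Where you genuinely diverge is in the only substantive step, (c) $\Rightarrow$ (a). The paper simply cites the limit formula (\ref{limit2}), $A_k(x_1,\dots,x_k)=\lim_{t\to 0+}\diff^k P(0;tx_1,\dots,tx_k)/(k!\,t^k)$, with the limit taken in the finest locally convex topology, and concludes positivity of $A_k$ in one line; this tacitly uses that passing to such a limit preserves the order, i.e.\ a closedness property of the positive cone, which the paper does not justify. You instead keep everything algebraic: you expand $\diff^k P(0;th_1,\dots,th_k)=\sum_{j\ge k}c_jt^j$ (valid for rational $t$ by $\Q$-multilinearity, with $c_k=k!\,A_k(h_1,\dots,h_k)$ by the $n=k$ case of (\ref{mixdiff})), set $t=1/n$, bound the tail by $u/n$ with $u=\sum_{j>k}|c_j|$ using the Riesz-space triangle inequality, and then kill $\plus{(-c_k)}$ with the standing archimedean assumption on $F$. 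This replaces the paper's topological appeal by exactly the order-theoretic fact that underlies it (for an archimedean space the cone is closed along such polynomial curves), so your argument is the more self-contained of the two: it makes explicit the role of archimedeanness, which in the paper is invisible, while the paper's version is shorter because it delegates the work to the earlier formula (\ref{limit2}) and its unstated convergence conventions.
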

\begin{proof}
	(a) $\implies$ (b) follows immediately from the remarks preceding the statement
	of the proposition and (b) $\implies$ (c) is trivial.
	
	Suppose that $P$ satisfies (c).  For the $k$-th
	component $P_k= \hat{A}_k$ and for $x_1,\dots,x_k\ge 0$, 
	we have, using~(\ref{limit2}),
	\begin{equation*}
	A_k(x_1,\dots,x_k) =
	\lim_{t\to 0+} \frac{\diff^k P(0;tx_1,\dots,tx_k)}{k!\, t^k}\,.
	\end{equation*}
	and so each component of $P$ is positive.
	
\end{proof}

Bochnak and Siciak \cite[Corollary 3]{BS} showed that a mapping
$P\colon E\to F$ between vector spaces is a polynomial if and only
if $P$ is a polynomial on every affine line in $E$.  This result
does not extend to positive polynomials on Riesz spaces.  The example
given before the last proposition shows that it is possible for a 
non-positive polynomial to be positive on every affine line.

A $1$-homogeneous $\Q$-polynomial is just an additive mapping, which need not be  linear.
However, every positive additive mapping (with an archimedean range) is linear.  The same is true for polynomials.  Before stating the next result, we recall that all the Riesz spaces in question are assumed to be archimedean.

\begin{proposition}\label{PosProp}
	Every positive $\Q$-polynomial between Riesz spaces is a polynomial.
\end{proposition}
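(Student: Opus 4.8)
The plan is to reduce the statement to the classical linear case quoted just above, namely that a positive additive mapping into an archimedean Riesz space is automatically $\R$-linear. Write $P = P_0 + \dots + P_m$, where each homogeneous component $P_k = \hat{A}_k$ is generated by a symmetric $k$-additive map $A_k$ which, by positivity of $P$, is positive in each variable. Since $P$ is already assumed to be a $\Q$-polynomial, to conclude that $P$ is a genuine polynomial it suffices to show that each $A_k$ is $\R$-multilinear; and as $A_k$ is $k$-additive, this amounts to proving that $A_k$ is $\R$-homogeneous in each of its variables.

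First I would fix nonnegative elements $x_2, \dots, x_k \ge 0$ and consider the partial mapping $S \colon E \to F$ given by $S(x_1) = A_k(x_1, x_2, \dots, x_k)$. This $S$ is additive, and it is positive: if $x_1 \ge 0$ then every argument of $A_k$ is nonnegative, so $S(x_1) \ge 0$. Since $E$ is a Riesz space and $F$ is archimedean, the classical linear result applies and shows that $S$ is $\R$-linear. Hence $A_k(\lambda x_1, x_2, \dots, x_k) = \lambda\, A_k(x_1, x_2, \dots, x_k)$ for every $\lambda \in \R$ and every $x_1 \in E$, as long as $x_2, \dots, x_k \ge 0$.

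The next step is to remove the restriction that $x_2, \dots, x_k$ be nonnegative. Here I would use the Riesz decomposition $x_i = x_i^+ - x_i^-$ into positive and negative parts together with the additivity of $A_k$ in each variable: expanding positions $2, \dots, k$ writes $A_k(\lambda x_1, x_2, \dots, x_k)$ as a signed sum of $2^{k-1}$ terms $\pm A_k(\lambda x_1, y_2, \dots, y_k)$ in which each $y_i \in \{x_i^+,\, x_i^-\}$ is nonnegative. The previous step lets me factor $\lambda$ out of each such term, and recombining the signed sum gives $A_k(\lambda x_1, x_2, \dots, x_k) = \lambda\, A_k(x_1, x_2, \dots, x_k)$ for arbitrary $x_2, \dots, x_k \in E$. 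By the symmetry of $A_k$ the same homogeneity holds in every variable, and combined with $k$-additivity this makes $A_k$ fully $\R$-multilinear. Therefore each component $P_k$ is a $k$-homogeneous polynomial and $P$ is a polynomial.

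I expect the main obstacle to be precisely this passage from nonnegative to arbitrary fixed variables: the classical linearity theorem can be invoked only when the partial mapping $S$ is positive, which forces the remaining variables into the positive cone, and it is the Riesz-space structure, via the decomposition into positive and negative parts, that bridges this gap. A secondary point to verify carefully is that the quoted linear result really delivers $\R$-homogeneity for all real scalars and all $x_1 \in E$, including negative scalars and elements outside the cone; this rests on the archimedean sandwiching of a real $\lambda$ between rationals, applied on the positive cone and then extended by additivity.
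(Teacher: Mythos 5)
Your proof is correct and follows essentially the same route as the paper: fix the remaining variables in the positive cone, note that the resulting partial map $x_j\mapsto A_k(x_1,\dots,x_k)$ is positive and additive, and invoke the classical fact that a positive additive mapping into an archimedean Riesz space is linear. The only difference is that you explicitly carry out, via the decomposition $x_i = x_i^+ - x_i^-$ and $k$-additivity, the passage from positive to arbitrary fixed variables --- a step the paper's proof leaves implicit when it jumps from linearity with positive fixed entries to the assertion that $A_k$ is linear in each variable.
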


\begin{proof}
	Let $P=P_0 + \dots + P_m\colon E\to F$ be a positive $\Q$-polynomial.  
	Each homogeneous component
	$P_k= \hat{A}_k$ is positive.  For each $j$ between $1$ and $k$,
	if we fix positive $x_1,\dots,x_{j-1}, x_{j+1}, \dots x_k\in E$, then $x_j\mapsto A_k(x_1,\dots,x_k)$
	is a positive additive mapping from $E$ into $F$ and so is linear.
	Thus $A_k$ is linear in each variable and so
	$P_k$ is a $k$-homogeneous polynomial.  Therefore $P$ is a polynomial.
\end{proof}

More generally, every order bounded additive
mapping from a  Riesz space into a Dedekind complete Riesz space is linear
(see, for example, \cite{EE}).  We recall that a mapping from $E$ to $ F$
is said to be order bounded if
it maps every order interval in $E$ into an order bounded subset of $F$.

\begin{proposition}
	Let $E$, $F$ be Riesz spaces and suppose that $F$ is Dedekind-complete.
	Then every order bounded $\Q$-polynomial from $E$ into $F$ is a polynomial.
\end{proposition}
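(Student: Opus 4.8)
The plan is to follow the same template as Proposition~\ref{PosProp}: reduce to showing that the symmetric $k$-additive map $A_k$ generating each homogeneous component $P_k=\hat A_k$ is in fact $k$-linear, and then invoke the cited fact that an order bounded additive map into a Dedekind complete Riesz space is linear. The only new ingredient needed is the order boundedness of the relevant partial maps of $A_k$, and this is where the actual work lies, since we no longer have positivity to fall back on.

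First I would check that order boundedness passes from $P$ to each homogeneous component $P_k$. By the interpolation formula~(\ref{interpolation}) we have $P_k(x)=\sum_{j=0}^m \alpha_{kj}P(jx)$. For any order interval $[u,v]\subseteq E$ and any $0\le j\le m$, the set $\{jx:x\in[u,v]\}=[ju,jv]$ is again an order interval, so $\{P(jx):x\in[u,v]\}$ is order bounded in $F$; being a finite linear combination of order bounded sets, $\{P_k(x):x\in[u,v]\}$ is then order bounded. Hence each $P_k$ is order bounded.

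Next I would pass from $P_k$ to the partial maps of $A_k$ via the Mazur--Orlicz Polarization Formula~(\ref{MO polarization}), taking the free point to be $0$:
\begin{equation*}
A_k(x_1,\dots,x_k) = \frac{1}{k!}\sum_{\delta_i=0,1}(-1)^{k-\sum\delta_i}\,P_k(\delta_1 x_1 + \dots + \delta_k x_k)\,.
\end{equation*}
Now fix $x_1,\dots,x_{j-1},x_{j+1},\dots,x_k\in E$ and let $x_j$ range over an order interval $[u,v]$. In each summand the argument of $P_k$ is either independent of $x_j$ (when $\delta_j=0$), or of the form $c+x_j$ for a fixed $c$ (when $\delta_j=1$), in which case it runs over the order interval $c+[u,v]$. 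Since $P_k$ is order bounded, each term defines an order bounded function of $x_j$, and hence so does the finite linear combination $x_j\mapsto A_k(x_1,\dots,x_k)$.

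Finally, this partial map is additive, order bounded, and takes values in the Dedekind complete space $F$, so by the cited result it is linear. Since this holds for every index $j$ and every choice of the remaining (arbitrary, not necessarily positive) variables, $A_k$ is $k$-linear; thus each $P_k$ is a genuine $k$-homogeneous polynomial and $P=P_0+\dots+P_m$ is a polynomial. The main obstacle, and what distinguishes this from the positive case, is deriving order boundedness of the partial maps from order boundedness of $P$ alone: in Proposition~\ref{PosProp} the partial maps were automatically positive, hence order bounded, whereas here I must track order boundedness explicitly through the two finite-combination identities above, both of which keep all arguments inside order intervals, where order boundedness is preserved.
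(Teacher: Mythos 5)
Your proof is correct and follows essentially the same route as the paper's: the interpolation formula~(\ref{interpolation}) gives order boundedness of each component $P_k$, a polarization formula gives order boundedness of the partial maps of $A_k$, and the cited result on order bounded additive maps into a Dedekind complete space gives linearity in each variable. Your use of the Mazur--Orlicz formula~(\ref{MO polarization}) at the free point $0$ in place of the standard polarization formula~(\ref{polarization}) is an inessential variant; you simply spell out the order-interval bookkeeping that the paper leaves implicit.
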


\begin{proof}
	Let $P=P_0 + \dots + P_m\colon E\to F$ be an order bounded $\Q$-polynomial.
	For each $k$, it follows from~(\ref{interpolation}) that $P_k(x)$ is a 
	linear combination
	of the values $P(jx)$, $0\le j \le m$, with coefficients that are independent of $x$.
	Hence $P_k$ is also order bounded.  If $P_k$ is generated
	by the symmetric $k$-additive mapping $A_k$, then by the polarization 
	formula, $A_k$
	is an order bounded function of each of its $k$ variables.  Therefore $A_k$ is linear
	in each variable and so $P_k$ is a $k$-homogeneous polynomial.  Thus $P$ is a polynomial.
\end{proof}

The Kantorovich extension theorem \cite[Theorem 1.10]{AB} is a fundamental
tool in the linear theory.  It states that an additive mapping
$T\colon \plus{E} \to \plus{F}$ between the positive cones of two
Riesz spaces extends to a unique positive linear mapping from $E$
into $F$.  In order to prove an analogous result for polynomial
mappings, we have to find a way to express the appropriate forms
of the additivity and positivity properties.  This can be done using finite 
differences.

We start with a result for $k$-homogeneous mappings.

\begin{lemma}
	Let $E$, $F$, be Riesz spaces and let $f\colon \plus{E}\to
	F$ be a mapping that satisfies
	\begin{enumerate}
		\item[(i)]  
		$\diff ^{k+1} f(0;h^{k+1})=0$ for all $h\in \plus{E}$, 
		\item[(ii)] 
		$f(nx)= n^k f(x)$ for all $x\in\plus{E}$, $n\in\N$\,,
	\end{enumerate}
	for some $k\ge 0$.
	Then $f$ extends to a unique  $k$-homogeneous 
	$\Q$-polynomial
	$P\colon E \to F$.
\end{lemma}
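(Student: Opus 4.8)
The plan is to recover a symmetric $k$-additive generating map directly from the finite differences of $f$ and then extend it from the cone to all of $E$. Writing $\triangle_h g = g(\cdot + h) - g(\cdot)$, so that $\diff^n f(x;h_1,\dots,h_n) = \triangle_{h_1}\cdots\triangle_{h_n}f(x)$, I would set
\[
A_k(x_1,\dots,x_k) = \tfrac{1}{k!}\,\diff^k f(0;x_1,\dots,x_k) \qquad (x_1,\dots,x_k\in\plus{E})\,.
\]
This is legitimate because, with base point $0$ and positive increments, formula~(\ref{mixed}) only ever evaluates $f$ on $\plus{E}$; the same formula shows the right-hand side is symmetric, so $A_k$ is symmetric. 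The task then splits into three parts: show that $A_k$ is additive in each variable on $\plus{E}$, that $A_k(x^k)=f(x)$, and that $A_k$ extends to $E^k$.

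The reproduction identity is where hypothesis (ii) enters. Since $f(ix)=i^k f(x)$, the same computation that led to~(\ref{riordan}) gives $\diff^j f(0;x^j)=\bigl(\sum_i(-1)^{j-i}\binom{j}{i}i^k\bigr)f(x)=j!\,\stirling{k}{j}f(x)$; taking $j=k$ yields $\diff^k f(0;x^k)=k!\,f(x)$, that is, $A_k(x^k)=f(x)$. For additivity I would use the operator identity
\[
\triangle_{u+v}=\triangle_u+\triangle_v+\triangle_u\triangle_v\,,
\]
which is immediate from $\triangle_u\triangle_v g = g(\cdot+u+v)-g(\cdot+u)-g(\cdot+v)+g(\cdot)$. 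Applying $\triangle_{h_1}\cdots\triangle_{h_{k-1}}$ to it and evaluating at $0$ gives
\[
k!\bigl[A_k(h_1,\dots,h_{k-1},u+v)-A_k(h_1,\dots,h_{k-1},u)-A_k(h_1,\dots,h_{k-1},v)\bigr]=\diff^{k+1}f(0;h_1,\dots,h_{k-1},u,v)\,.
\]
Thus additivity in the last slot—and, since $A_k$ is symmetric, in every slot—is equivalent to the vanishing of the mixed $(k+1)$-st difference at the origin, which is exactly the content I would extract from hypothesis (i).

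Granting additivity, the extension is routine. A map that is additive in each variable on $\plus{E}$ extends, one slot at a time via the decomposition $x=\plus{x}-x^-$, to a $k$-additive map $\tilde A_k$ on $E^k$, just as a single positive-cone-additive map $g$ extends by $g(\plus{x})-g(x^-)$; each single-slot extension preserves additivity in the remaining slots. Setting $P(x)=\tilde A_k(x,\dots,x)$ produces a $k$-homogeneous $\Q$-polynomial with $P(x)=A_k(x^k)=f(x)$ for $x\in\plus{E}$. Uniqueness follows from~(\ref{mixdiff}) with $n=k$: any $k$-homogeneous $\Q$-polynomial $Q=\hat{B}_k$ agreeing with $f$ on $\plus{E}$ satisfies $k!\,B_k(x_1,\dots,x_k)=\diff^k Q(0;x_1,\dots,x_k)=\diff^k f(0;x_1,\dots,x_k)=k!\,A_k(x_1,\dots,x_k)$ for positive arguments, so $B_k=A_k$ on $(\plus{E})^k$ and hence, by $k$-additivity, on all of $E^k$.

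The main obstacle is the additivity step, namely establishing $\diff^{k+1}f(0;h_1,\dots,h_{k+1})=0$ for positive increments. Hypothesis (ii) by itself is too weak for this—a positively $k$-homogeneous function on a cone need not be polynomial—so the vanishing of these $(k+1)$-st differences, supplied by (i), is indispensable. The delicate point is that additivity is governed by the \emph{mixed} differences, and the precise way one passes from the difference hypothesis to this mixed vanishing—using the homogeneity (ii) and the pure-to-mixed relation of identity~(\ref{id})—is the crux of the argument; everything else is bookkeeping and the standard positive-cone extension.
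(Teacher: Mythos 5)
Your proposal has the same skeleton as the paper's proof: the definition $A_k=\tfrac{1}{k!}\diff^k f(0;\cdot,\dots,\cdot)$, the Stirling-number computation giving $A_k(x^k)=f(x)$, the slot-by-slot extension via $x=\plus{x}-x^-$, and the uniqueness argument are all correct and essentially identical to what the paper does. The genuine gap is the step you yourself defer: the vanishing of the \emph{mixed} differences $\diff^{k+1}f(0;h_1,\dots,h_{k+1})$ for positive increments, which (as your operator identity correctly shows) is exactly what additivity of $A_k$ requires. You propose to obtain it from hypothesis (i) via the pure-to-mixed identity~(\ref{id}), but that identity is unusable here, and the paper says so explicitly (``Note that this does not follow from (\ref{id}), since we are now dealing with positive increments only''). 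Concretely, the right-hand side of~(\ref{id}) involves pure differences whose increments $-\sum_j\delta_jh_j/j$ are negative and whose base points are $x+\sum_j\delta_jh_j$ rather than $0$; its evaluation points (for instance $(1-i)h_1$ with $i\ge 2$) leave $\plus{E}$, where $f$ is not even defined, and hypothesis (i) controls only pure differences based at the origin.

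In fact no argument can extract the mixed vanishing from (i) as literally stated. If $k\ge 1$, condition (ii) already forces $f(0)=0$ and $f(ih)=i^kf(h)$, whence $\diff^{k+1}f(0;h^{k+1})=\bigl(\sum_{i=0}^{k+1}(-1)^{k+1-i}\binom{k+1}{i}i^k\bigr)f(h)=(k+1)!\,\stirling{k}{k+1}f(h)=0$ automatically, so ``(i) at base point $0$'' carries no information beyond (ii); yet $f(x_1,x_2)=\sqrt{x_1x_2}$ on $\plus{(\R^2)}$ with $k=1$ satisfies (i) and (ii), has $\diff^2 f(0;e_1,e_2)=1\neq 0$, and admits no additive extension. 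So the hypothesis must be used (as the paper's own proof tacitly does when it truncates the Newton expansion of $f(x+nh)$ at the base point $x$, and as the subsequent Kantorovich proposition states it) in the form $\diff^{k+1}f(x;h^{k+1})=0$ for \emph{all} $x,h\in\plus{E}$. Granting that reading, the paper closes the gap by a device quite different from yours: for fixed $h_1,\dots,h_{k+1}\ge 0$, iterated truncated Newton expansions show that $g(n_1,\dots,n_{k+1})=f(n_1h_1+\dots+n_{k+1}h_{k+1})$ is a polynomial in the integer variables $n_i$; condition (ii) makes $g$ $k$-homogeneous; and $\diff^{k+1}f(0;h_1,\dots,h_{k+1})$ is precisely the $(k+1)$st difference of the degree-$k$ polynomial $g$ at $0$ with increments $e_1,\dots,e_{k+1}$, hence zero. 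This Mazur--Orlicz-style passage through the integer lattice is the missing idea in your proposal; everything else you wrote is indeed, as you say, bookkeeping.
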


\begin{proof}
	We use a similar line of argument to that in \cite[Satz I.]{MO}.  The case $k=0$ is trivial, 
	so assume that $k\ge 1$.
	We claim that condition (i) implies that	
	\begin{equation*}
	\diff^{k+1} f(0,h_1,\dots,h_{k+1}) =0
	\end{equation*}
	for every $h_1,\dots,h_{k+1} \ge 0$.
	Note that this does not follow from (\ref{id}), since
	we are now dealing with positive increments only.
	We take the Newton expansion of $f(x+nh)$ where 
	$x,h\ge 0$ and $n\in\N$.  As 
	the differences of $f$ of order $j$ vanish for $j>k$
	and the binomial coefficients vanish when $n<j\le k$,
	we have
	\begin{equation*}
	f(x+nh)= \sum_{j=0}^k \binom{n}{j}\diff^j f(x;h^j)\,,
	\end{equation*} 
	which is a polynomial in $n$ of degree $k$ or less. 
	Applying this argument a second time, we see that
	$f(x+n_1h_1+n_2h_2)$ is a polynomial in
	the variables $n_1$, $n_2$.
	Iterating this $k$ times with $x=0$, we find that 
	$f(n_1h_1+ \dots +n_{k+1}h_{k+1})$ is a 
	polynomial in $n_1, \dots,n_{k+1}\in \N$.
	Fixing $h_1,\dots,h_{k+1} \ge 0$, let
	$g\colon \N^{k+1} \to F$ be the polynomial
	given by $g(n_1,\dots,n_{k+1}) =
	f(n_1h_1+ \dots + n_{k+1}h_{k+1})$.
	It follows by condition (ii) that $g$ is $k$-homogeneous.
	Now 
	\begin{equation*}
	\diff ^{{k+1}}f(0;h_1,\dots,h_{k+1})= \sum_{\delta_i = 0,1}
	(-1)^{k+1-\sum \delta_i} \,f(\delta_1 h_1 + \dots + \delta_{k+1} h_{k+1})
	\end{equation*}
	is the same as the ($k+1$)st difference of $g$ at $0$ with 
	increments $e_1,\dots ,e_{k+1}$. Since $g$ is a $k$-homogeneous
	polynomial, these differences vanish.  This establishes our claim.
	
	It follows that the mapping
	\begin{equation*}
	A(x_1,\dots,x_k) =
	\frac{1}{k!} \diff^kf(0;x_1,\dots,x_k)
	\end{equation*}
	for $x_1,\dots,x_k \ge 0$ is  additive in each of its $k$ variables.
	We  extend $A$ one
	variable at a time to a mapping
	$\tilde A \colon E^k \to F$.
	Thus, $\tilde A(x_1,x_2,\dots,x_m)$ is defined to be
	$A(x_1^+,x_2,\dots,x_m)-A(x_1^-,x_2,\dots,x_m)$
	for $x_2,\dots,x_m\ge 0$, and so on.  It is easy to see
	that this extension is additive in each variable and unique.
	
	Let $P$ be the $k$-homogeneous $\Q$-polynomial
	generated by $\tilde A$.  We claim that $P$ extends $f$.  
	To see this, let $x\in \plus{E}$.  Then, using the
	$k$-homogeneity condition (ii),
	\begin{align*}
	P(x) = A(x^k) &= \frac{1}{k!}\diff^k f(0,x^k)
	= \frac{1}{k!}\sum_{j=0}^k (-1)^{k-j}\binom{k}{j}f(jx)\\
	&= \frac{1}{k!} \sum_{j=0}^k (-1)^{k-j} \binom{k}{j} j^k \;f(x)
	=f(x)\,,
	\end{align*}
	using the identity
	\begin{equation*}
	\stirling{m}{k} = \frac{1}{k!}\sum_{j=0}^k (-1)^{k-j} 
	\binom{k}{j} j^m
	\end{equation*}
	with $k=m$. 
\end{proof}

\begin{proposition}[Kantorovich extension theorem for polynomials]
	Let $E$, $F$ be Riesz spaces with $F$ archimedean and let $f\colon \plus{E} \to \plus{F}$ 
	be a mapping that satisfies
	\begin{enumerate}
		\item[(i)] $\diff ^{m+1} f(x;h^{m+1})=0$ for all $x, h\in \plus{E}$, 
		and 
		\item[(ii)] $\diff ^kf(x;h_1,\dots,h_k)\ge 0$ for all 
		$x,h_1,\dots,h_k \in \plus{E}$, $1\le k\le m$\, ,
	\end{enumerate}
	for some $m\in \N$.
	Then $f$ extends to a unique positive polynomial $P\colon E\to F$ 
	of degree $m$ or less.
\end{proposition}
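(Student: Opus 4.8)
The plan is to reduce the statement to the preceding lemma by extracting the homogeneous components of $f$ on the positive cone, and then to verify positivity separately. First I would note that condition (i) forces all higher pure differences to vanish on $\plus{E}$: from the recursion $\diff^{m+2}f(x;h^{m+2})=\diff^{m+1}f(x+h;h^{m+1})-\diff^{m+1}f(x;h^{m+1})$ and induction, $\diff^{j}f(x;h^{j})=0$ for every $j>m$ and all $x,h\in\plus{E}$. Consequently the Newton expansion (\ref{Newton}) at the base point $0$ truncates, exactly as in the derivation of the Mazur--Orlicz theorem, giving $f(nx)=\sum_{j=0}^{m}\binom{n}{j}\diff^{j}f(0;x^{j})$ for every $x\in\plus{E}$ and $n\in\N$; the right-hand side is a polynomial in $n$ with coefficients in $F$. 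Passing from the basis $\{\binom{n}{j}\}$ to $\{n^{k}\}$ as in the derivation of (\ref{extract}), I define, for $x\in\plus{E}$,
\[
f_{k}(x) := \sum_{j=k}^{m}\frac{1}{j!}\stirlingfirst{j}{k}(-1)^{j-k}\diff^{j}f(0;x^{j}),
\]
so that $f(nx)=\sum_{k=0}^{m}n^{k}f_{k}(x)$ for all $n\in\N$; in particular $f=\sum_{k=0}^{m}f_{k}$ on $\plus{E}$.

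Next I would check that each $f_{k}$ satisfies both hypotheses of the lemma. Replacing $x$ by $px$ with $p\in\N$ in $f(nx)=\sum_{k}n^{k}f_{k}(x)$ and comparing the coefficients of $n^{k}$ yields the homogeneity $f_{k}(px)=p^{k}f_{k}(x)$, which is hypothesis (ii) of the lemma. Hypothesis (i) is then automatic: using this homogeneity,
\[
\diff^{k+1}f_{k}(0;h^{k+1}) = \sum_{i=0}^{k+1}(-1)^{k+1-i}\binom{k+1}{i}f_{k}(ih) = \left(\sum_{i=0}^{k+1}(-1)^{k+1-i}\binom{k+1}{i}i^{k}\right)f_{k}(h),
\]
and the bracketed scalar equals $(k+1)!\stirling{k}{k+1}=0$. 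Thus the lemma applies to each $f_{k}$ and produces a unique $k$-homogeneous $\Q$-polynomial $P_{k}\colon E\to F$ extending $f_{k}$. Setting $P:=P_{0}+\dots+P_{m}$ gives a $\Q$-polynomial of degree $m$ or less agreeing with $\sum_{k}f_{k}=f$ on $\plus{E}$. Uniqueness follows at once: any $\Q$-polynomial $Q=\sum_{k}Q_{k}$ of degree $\le m$ extending $f$ satisfies $\sum_{k}n^{k}Q_{k}(x)=f(nx)=\sum_{k}n^{k}f_{k}(x)$ for $x\in\plus{E}$ and all $n$, so each $Q_{k}$ extends $f_{k}$ and hence equals $P_{k}$ by the lemma.

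It remains to prove that $P$ is positive, and I expect this to be the main obstacle, since condition (ii) controls the mixed differences $\diff^{k}f(0;h_{1},\dots,h_{k})$, and by (\ref{mixdiff}) these mix the contributions of all the $P_{j}$ with $j\ge k$ rather than isolating $P_{k}$. To separate out $A_{k}$, where $P_{k}=\hat{A}_{k}$, I would use (\ref{limit2}). For $x_{1},\dots,x_{k}\in\plus{E}$ and $t>0$, the increments $tx_{i}$ are positive, so $P$ agrees with $f$ at every point occurring in the difference, and condition (ii) gives $\diff^{k}P(0;tx_{1},\dots,tx_{k})=\diff^{k}f(0;tx_{1},\dots,tx_{k})\ge 0$. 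By (\ref{mixdiff}) with base point $0$ (the terms with $j>k$ in the homogeneity decomposition contribute higher powers of $t$, and those with $j<k$ vanish), this quantity is a polynomial in $t$ of the form $k!\,A_{k}(x_{1},\dots,x_{k})\,t^{k}+\sum_{j>k}c_{j}t^{j}$, whence
\[
A_{k}(x_{1},\dots,x_{k}) = \lim_{t\to 0+}\frac{\diff^{k}P(0;tx_{1},\dots,tx_{k})}{k!\,t^{k}},
\]
a limit of nonnegative elements of $F$.

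The final and genuinely delicate step, where the archimedean hypothesis on $F$ is used, is to conclude that this limit is itself nonnegative. Evaluating along $t=1/q$ one obtains $A_{k}(x_{1},\dots,x_{k})+w_{q}\ge 0$ where $|w_{q}|\le \tfrac1q u$ for a fixed $u=\sum_{j>k}|c_{j}|/k!\ge 0$ and all $q\ge 2$; hence $q\,A_{k}(x_{1},\dots,x_{k})^{-}\le u$ for all such $q$, and the archimedean property of $F$ forces $A_{k}(x_{1},\dots,x_{k})^{-}=0$. Thus each $A_{k}$ is positive in each variable, so $P$ is a positive $\Q$-polynomial; by Proposition~\ref{PosProp} it is in fact a polynomial of degree $m$ or less, as required.
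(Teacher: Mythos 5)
Your proposal is correct and takes essentially the same approach as the paper's proof: the truncated Newton expansion at $0$, inversion via Stirling numbers of the first kind to define the cone components $f_k$, their homogeneity $f_k(px)=p^k f_k(x)$, the lemma applied to each $f_k$, and positivity combined with Proposition \ref{PosProp}. Your two local variants are sound and merely unfold what the paper leaves terse: you verify hypothesis (i) of the lemma by a direct computation using $\stirling{k}{k+1}=0$ from homogeneity alone, where the paper routes this through the interpolation formula (\ref{interpolation}) (note only that for $k=0$ your identity $f_0(ih)=i^0f_0(h)$ presupposes what is to be shown, but there $f_0\equiv f(0)$ trivially since $\stirlingfirst{j}{0}=0$ for $j\ge 1$), and you make explicit, at the rational points $t=1/q$, the archimedean limit argument for positivity that the paper delegates to the implication (c)$\Rightarrow$(a) of its first proposition.
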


\begin{proof}
	Arguing as in the proof of the lemma, we take the Newton expansion
	of $f$ at $0$, using the vanishing of  the differences of order greater
	than $m$ to get 
	\begin{equation*}
	f(nx) = \sum_{j=0}^m \binom{n}{j} \diff^j f(0;x^j)
	\end{equation*}
	for $n\in \N$ and $x\ge 0$.  Using (\ref{stirling1}) to 
	represent the binomial coefficients in terms of powers of
	n, we see that this expression
	can be rearranged into the form
	\begin{equation*}
	f(nx) = \sum_{k=0}^m f_k(x) n^k
	\end{equation*}
	where
	\begin{equation*}
	f_k(x) = \sum_{j=k}^m \frac{1}{j!} \stirlingfirst{j}{k} (-1)^{j-k} 
	\diff^j f(0;x^j)\,.
	\end{equation*}
	From the fact that $f(n(px)) = f((np)x)$ for $n,p\in\N$, we get that
	$f_k(px) = p^k f_k(x)$ for each $k$. 
	And it follows from the interpolation formula (\ref{interpolation})
	that the mappings $f_k$ satisfy the condition
	$\diff^{m+1}f_k(0,h^{m+1}) =0$ for every $x,h\ge 0$.
	Together, these two facts imply that 
	$\diff^{k+1}f_k(0,h^{k+1}) =0$ for every $x,h \ge 0$.
	
	It follows from the lemma that 
	each $f_k$ extends to a $k$-homogeneous $\Q$-polynomial
	$P_k$. Thus $P= \sum_k P_k$
	is a $\Q$-polynomial that agrees with $f$ on $\plus{E}$.
	By condition (ii), $P$ is positive and 
	so, by Proposition \ref{PosProp}, $P$ is a polynomial.
\end{proof}



\bibliographystyle{plain}
\bibliography{PolynomialKantorovich}

\end{document}